\newtheorem{theorem}{Theorem}[section]
\newtheorem{lemma}[theorem]{Lemma}
\newtheorem{proposition}[theorem]{Proposition}
\newtheorem{corollary}[theorem]{Corollary}
\theoremstyle{definition}
\theoremstyle{remark}
\newtheorem{remark}[theorem]{Remark}
\numberwithin{equation}{section}
\begin{document}

\title[Majorization and range inclusion of operators]{On majorization and range inclusion of operators on Hilbert $C^*$-modules}

\author[X. Fang]{Xiaochun Fang$^1$}
\address{$^1$Department of Mathematics, Tongji University, Shanghai 200092, PR China}
\email{xfang@tongji.edu.cn}
\author[M. S. Moslehian]{Mohammad Sal Moslehian$^2$}
\address{$^2$Department of Pure Mathematics, Ferdowsi University of Mashhad, P.O. Box 1159, Mashhad 91775, Iran}
\email{moslehian@um.ac.ir}
\author[Q. Xu]{Qingxiang Xu$^3$$^*$}
\address{$^3$Department of Mathematics, Shanghai Normal University, Shanghai 200234, PR China}
\email{qxxu@shnu.edu.cn}

\subjclass[2010]{46L08, 47A05.}
\keywords{Hilbert $C^*$-module, majorization, range inclusion.\\
$^1$ Supported by the National Natural Science Foundation of China (11371279, 11511130012).\\
$^2$ Supported by a grant from the Iran National Science Foundation (No. 96006713).\\
$^3$ Supported by the National Natural Science Foundation of China (11671261).}

\begin{abstract} It is proved that for adjointable operators $A$ and $B$ between Hilbert $C^*$-modules, certain majorization conditions are always equivalent without any assumptions on
 $\overline{\mathcal{R}(A^*)}$, where $A^*$ denotes the adjoint operator of $A$ and $\overline{\mathcal{R}(A^*)}$ is the norm closure of the range of $A^*$. In the case that $\overline{{\mathcal R}(A^*)}$ is not orthogonally complemented,
 it is proved that there always exists an adjointable operator $B$ whose range is contained in that of $A$,
whereas the associated equation
$AX=B$ for adjointable operators is unsolvable.
 \end{abstract}

 \maketitle

\section{Introduction}\label{sec:pre}

Hilbert $C^*$-modules are generalizations of Hilbert spaces by allowing inner products to take values in some $C^{*}$-algebras instead of the field of complex numbers. Hilbert $C^*$-modules are useful tools in $AW^*$-algebra theory, theory of operator algebras, operator K-theory, group representation theory and theory of operator spaces. Some basic properties of Hilbert spaces are no longer valid in setting of Hilbert $C^*$-modules in their full generality. For example, a closed submodule may not be orthogonally complemented and a bounded linear operator between Hilbert $C^*$-modules may have no adjoint operator. Therefore, when we are studying Hilbert $C^*$-modules, it is always of interest under which conditions the results analogous to those for Hilbert spaces can be reobtained, as well as which more general situations might appear.

Douglas \cite{douglas} studied the equation $AX=B$ for bounded linear operators on Hilbert spaces, and gave his so-called Douglas theorem. This result extensively applied in division and quotients of operators, operator equations, operator range inclusions, generalized inverses, and operator inequalities. This is not the case in more general situations, e.g. in the framework of Banach spaces. There however are some variants of this theorem under some conditions in various settings, see e.g. \cite{BAR, hhw, POP, ROD, SEB} and references therein.

A generalization of Douglas theorem to the Hilbert $C^*$-module case was given in \cite{fang-yu-yao}, which can be stated as follows:
\begin{theorem}\label{thm:fang s result} {\rm \cite[Theorem 1.1]{fang-yu-yao}}\ Let $\mathfrak{A}$ be a $C^*$-algebra, $E,H$ and $K$ be Hilbert $\mathfrak{A}$-modules. Let $T\in {\mathcal L}(E,K)$ and $T^\prime\in {\mathcal L}(H,K)$ be such that $\overline{{\mathcal R}(T^*)}$ is orthogonally complemented. Then the following statements are equivalent:
\begin{enumerate}
\item[{\rm (i)}] $T^\prime (T^\prime)^*\le \lambda TT^*$ for some $\lambda>0$;
\item[{\rm (ii)}]There exists $\mu>0$ such that $\Vert (T^\prime)^*z\Vert\le \mu \Vert T^*z\Vert$, for any $z\in K$;
\item[{\rm (iii)}]There exists a solution $X\in {\mathcal L}(H,E)$ of the so-called Douglas equation $T^\prime =TX$;
\item[{\rm (iv)}]${\mathcal R}(T^\prime)\subseteq {\mathcal R}(T)$.
\end{enumerate}
\end{theorem}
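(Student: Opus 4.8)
The plan is to establish the cyclic chain $(\mathrm{iii})\Rightarrow(\mathrm{i})\Rightarrow(\mathrm{ii})\Rightarrow(\mathrm{iii})$ together with $(\mathrm{iii})\Leftrightarrow(\mathrm{iv})$, so that all four conditions become equivalent. Three of the links are routine and I would dispose of them first. For $(\mathrm{iii})\Rightarrow(\mathrm{i})$, if $T'=TX$ then $T'(T')^*=TXX^*T^*\le\|X\|^2\,TT^*$, since $XX^*\le\|X\|^2 I_E$; this gives (i) with $\lambda=\|X\|^2$. For $(\mathrm{i})\Rightarrow(\mathrm{ii})$ I would pass from the order inequality in $\mathcal{L}(K)$ to a pointwise inequality in $\mathfrak{A}$: for every $z\in K$, $\langle (T')^*z,(T')^*z\rangle=\langle T'(T')^*z,z\rangle\le\lambda\langle TT^*z,z\rangle=\lambda\langle T^*z,T^*z\rangle$, and taking $C^*$-norms (using $0\le a\le b\Rightarrow\|a\|\le\|b\|$) yields $\|(T')^*z\|\le\sqrt{\lambda}\,\|T^*z\|$, i.e. (ii) with $\mu=\sqrt{\lambda}$. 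Finally $(\mathrm{iii})\Rightarrow(\mathrm{iv})$ is immediate from $\mathcal{R}(T')=\mathcal{R}(TX)\subseteq\mathcal{R}(T)$.

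The substantial content is the implication $(\mathrm{ii})\Rightarrow(\mathrm{iii})$, which is where the hypothesis on $\overline{\mathcal{R}(T^*)}$ is used. Assuming (ii), I would first note that $\ker T^*\subseteq\ker (T')^*$ (if $T^*z=0$ then $\|(T')^*z\|\le\mu\|T^*z\|=0$), so the Douglas-type assignment $Y_0\colon T^*z\mapsto (T')^*z$ is a well-defined $\mathfrak{A}$-linear map on $\mathcal{R}(T^*)$; the bound in (ii) shows it is bounded there, hence extends by continuity to $Y_0\in\mathcal{B}(\overline{\mathcal{R}(T^*)},H)$ with $\|Y_0\|\le\mu$. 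Now let $P\in\mathcal{L}(E)$ be the orthogonal projection onto $\overline{\mathcal{R}(T^*)}$ --- which exists precisely because $\overline{\mathcal{R}(T^*)}$ is orthogonally complemented --- and set $Y=Y_0P\in\mathcal{B}(E,H)$. The goal is to put $X=Y^*$ and verify $TX=T'$; indeed, once $Y$ is known to be adjointable one checks on the dense set $\mathcal{R}(T^*)$ that $\langle z,TY^*h\rangle=\langle T^*z,Y^*h\rangle=\langle YT^*z,h\rangle=\langle (T')^*z,h\rangle=\langle z,T'h\rangle$, whence $TX=T'$. I expect the adjointability of $Y$ to be the main obstacle: a bounded module map between Hilbert $C^*$-modules need not be adjointable, and the existence of $Y^*$ amounts exactly to representing each functional $u\mapsto\langle Yu,h\rangle$ on $\overline{\mathcal{R}(T^*)}$ by an element of that submodule, equivalently to producing, for each $h$, a preimage of $T'h$ lying inside $\overline{\mathcal{R}(T^*)}$. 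This is the step at which orthogonal complementability is indispensable, and the delicate point will be to exploit the splitting $E=\overline{\mathcal{R}(T^*)}\oplus\ker T$ to carry out this representation.

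For $(\mathrm{iv})\Rightarrow(\mathrm{iii})$ the complement does the work cleanly. Using $\ker T=\overline{\mathcal{R}(T^*)}^{\perp}$ and the splitting $E=\overline{\mathcal{R}(T^*)}\oplus\ker T$, every $h$ has $T'h\in\mathcal{R}(T)$ by (iv), so there is a unique $X_0h\in\overline{\mathcal{R}(T^*)}$ with $TX_0h=T'h$ (uniqueness because $\ker T\cap\overline{\mathcal{R}(T^*)}=\{0\}$). The map $X_0$ is module-linear, and I would prove it bounded by the closed graph theorem (Hilbert $C^*$-modules being Banach spaces). Then the identity $\langle X_0h,T^*z\rangle=\langle TX_0h,z\rangle=\langle T'h,z\rangle=\langle h,(T')^*z\rangle$, valid for all $h,z$, simultaneously shows (upon also using $\langle X_0h,e\rangle=0$ for $e\in\ker T$) that $X_0$ is adjointable with adjoint the Douglas map of the previous paragraph, so that $X:=X_0$ composed with the adjointable inclusion $\overline{\mathcal{R}(T^*)}\hookrightarrow E$ lies in $\mathcal{L}(H,E)$ and satisfies $TX=T'$; taking $C^*$-norms in the same identity even recovers (ii) with $\mu=\|X_0\|$. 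This closes the cycle and yields the equivalence of all four statements. I would stress that orthogonal complementability of $\overline{\mathcal{R}(T^*)}$ enters essentially in both nontrivial implications --- through the projection $P$ and the orthogonal splitting --- and, as the companion results of the paper show, it cannot be dropped.
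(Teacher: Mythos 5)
Your routine links are fine: (iii)$\Rightarrow$(i) via $XX^*\le\Vert X\Vert^2 I_E$ and conjugation, (i)$\Rightarrow$(ii) via Lemmas~\ref{lem:positive operator} and~\ref{lem:C-star elementary}, and (iii)$\Rightarrow$(iv) trivially. Your sketch of (iv)$\Rightarrow$(iii) --- split $E=\overline{\mathcal{R}(T^*)}\oplus\mathcal{N}(T)$, take for each $h$ the unique preimage of $T'h$ lying in $\overline{\mathcal{R}(T^*)}$, get boundedness from the closed graph theorem, then check adjointability against the Douglas-type map --- is essentially the portion of the proof of \cite[Theorem~1.1]{fang-yu-yao} that the present paper certifies as correct (namely ``(iv)$\Rightarrow$(ii)'' and ``(ii)+(iv)$\Rightarrow$(iii)''), and it is also how the implication (i)$\Rightarrow$(ii) of Theorem~\ref{thm:two orthogonally complemented conditions} is obtained.

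The genuine gap is (ii)$\Rightarrow$(iii), and it is not a gap you could have closed. You build $Y=Y_0P$, observe that $TX=T'$ would follow ``once $Y$ is known to be adjointable,'' correctly identify adjointability as the main obstacle, and then defer it as a ``delicate point'' to be settled by exploiting the splitting. No such settlement exists: the implication (ii)$\Rightarrow$(iii) is \emph{false}, even under the standing hypothesis that $\overline{\mathcal{R}(T^*)}$ is orthogonally complemented, and this is precisely the gap in \cite{fang-yu-yao} that the present paper was written to expose. Indeed, by the counterexample of \cite{Xu-Fang} there exist $T,T'$ with $\overline{\mathcal{R}(T^*)}$ orthogonally complemented, $T'(T')^*\le\lambda TT^*$, and yet $\mathcal{R}(T')\not\subseteq\mathcal{R}(T)$; since (i)$\Leftrightarrow$(ii) holds unconditionally (Corollary~\ref{cor:equivalence of i and ii}) and (iii)$\Rightarrow$(iv) is trivial, (ii) cannot imply (iii) there. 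So the four-way equivalence you set out to prove is false as stated: what survives is (i)$\Leftrightarrow$(ii) (always, by Theorem~\ref{thm:main thm-1}, whose proof requires the $C^*$-algebra lemma that $\Vert ac\Vert\le\Vert bc\Vert$ for all positive $c$ forces $a^2\le b^2$, not the projection argument) and (iii)$\Leftrightarrow$(iv) (under the complementedness hypothesis), with no bridge between the two pairs. Your instinct about where the difficulty sits was exactly right, and it is the content of Remark~\ref{rem:construction of operator V}: condition (ii) always yields a bounded module map $V$ with $VT^*=(T')^*$, but a bounded module map between Hilbert $C^*$-modules need not be adjointable, and $V^*$ --- hence the desired solution $X=V^*$ --- may simply fail to exist, no matter how the projection $P$ is exploited.
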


Much attention has been paid since the publication of \cite{fang-yu-yao}, and Theorem~\ref{thm:fang s result} was cited in many literatures; see the references in \cite{Xu-Fang} as well as \cite{FOR}. It follows from the proof of \cite[Theorem~1.1]{fang-yu-yao} that
``(iv)$\Longrightarrow$ (ii)" and ``(ii)+(iv)$\Longrightarrow$ (iii)$\Longrightarrow$ (i)$\Longrightarrow$ (ii)" are true. Clearly,
``(iii)$\Longrightarrow$ (iv)" is valid, so conditions (iii) and (iv) in Theorem~\ref{thm:fang s result} are really equivalent if $\overline{{\mathcal R}(T^*)}$ is orthogonally complemented. Unfortunately, a counterexample is constructed in \cite{Xu-Fang} which indicates that the implication ``(i)$\Longrightarrow$ (iv)" is false even if $\overline{{\mathcal R}(T^*)}$ is orthogonally complemented. A gap is then contained in \cite{fang-yu-yao} to the proof of the implication ``(ii)$\Longrightarrow$ (i)", see also \cite{Moslehian}.

The purpose of this paper is to cover the gap mentioned above, and to investigate the
equivalence of the conditions above when $\overline{{\mathcal R}(T^*)}$ is not orthogonally complemented.
 Actually, we will prove, without taking account $\overline{{\mathcal R}(T^*)}$ is orthogonally complemented or not, conditions (i) and (ii) are always equivalent; see Corollary~\ref{cor:equivalence of i and ii}. It is remarkable that the same is not true for
 conditions (iii) and (iv). In fact, for any Hilbert $\mathfrak{A}$-module $E, K$, and any operator $T \in \mathcal{L}(E,K)$, a Hilbert $\mathfrak{A}$-module $G$ and an operator $S\in \mathcal{L}(G,K)$ can be introduced as \eqref{equ:defn of G and S}
 such that $\mathcal{R}(S)$ is contained in $\mathcal{R}(T)$ evidently, whereas the equation
$S=TX$ for $X\in {\mathcal L}(G,E)$ is solvable only if $\overline{{\mathcal R}(T^*)}$ is orthogonally complemented.

 The rest of this paper is arranged as follows. In Section~\ref{sec:equivalence of two majorization conditions}, we first recall some basic knowledge about Hilbert $C^*$-modules. Based on a slight modification of the proof of \cite[Lemma~3.4]{lance}, an inequality is clarified in Lemma~\ref{lem:lance p25} for the square of two positive elements in a $C^*$-algebra. As an application, the equivalence of two majorization conditions is proved in Theorem~\ref{thm:main thm-1}. The equivalence of conditions (i) and (ii) then follows directly without any assumptions on $\overline{{\mathcal R}(T^*)}$.
 In Section~\ref{sec:orthogonally complemented condition}, we focus on the study of the orthogonally complemented condition for $\overline{{\mathcal R}(T^*)}$. Two equivalent conditions are provided in
Theorem~\ref{thm:two orthogonally complemented conditions}, which indicate the mentioned unsolvbility of the equation $S=TX$ for $X \in {\mathcal L}(G,E)$ when
 $\overline{{\mathcal R}(T^*)}$ is not orthogonally complemented. Finally, an example is constructed in Proposition~\ref{prop:nonequivalence} where $T\in {\mathcal L}(H,K)$ and $T^\prime\in {\mathcal L}(K)$ are such that ${\mathcal R}(T^\prime)$ is contained in ${\mathcal R}(T)$ properly, whereas the equation $TX=T^\prime$ for $X\in\mathcal{L}(K,H)$ in unsolvable.

\section{Equivalence of two majorization conditions}\label{sec:equivalence of two majorization conditions}

Throughout the rest of this paper, $\mathfrak{A}$ is a $C^*$-algebra. An
inner-product $\mathfrak{A}$-module is a linear space $E$ which is a right
$\mathfrak{A}$-module, together with a map $(x,y)\to \big<x,y\big>:E\times E\to
\mathfrak{A}$ such that for any $x,y,z\in E, \alpha, \beta\in \mathbb{C}$ and
$a\in \mathfrak{A}$, the following conditions hold:
\begin{enumerate}
\item[(i)] $\langle x,\alpha y+\beta
z\rangle=\alpha\langle x,y\rangle+\beta\langle x,z\rangle;$

\item[(ii)] $\langle x, ya\rangle=\langle x,y\rangle a;$

\item[(iii)] $\langle y,x\rangle=\langle x,y\rangle^*;$

\item[(iv)] $\langle x,x\rangle\ge 0, \ \mbox{and}\ \langle x,x\rangle=0\Longleftrightarrow x=0.$
\end{enumerate}

An inner-product $\mathfrak{A}$-module $E$ which is complete with respect to
the induced norm ($\Vert x\Vert=\sqrt{\Vert \langle x,x\rangle\Vert}$ for
$x\in E$) is called a (right) Hilbert $\mathfrak{A}$-module.
A closed
submodule $F$ of a Hilbert $\mathfrak{A}$-module $E$ is said to be
orthogonally complemented if $E=F\oplus F^\bot$, where
$$F^\bot=\left\{x\in E : \langle x,y\rangle=0\ \mbox{for any }\ y\in
F\right\}.$$

Now suppose that $H$ and $K$ are two Hilbert $\mathfrak{A}$-modules, let ${\mathcal L}(H,K)$
be the set of operators $T:H\to K$ for which there is an operator $T^*:K\to
H$ such that $$\langle Tx,y\rangle=\langle x,T^*y\rangle \ \mbox{for any
$x\in H$ and $y\in K$}.$$ It is known that any element $T \in {\mathcal
L}(H,K)$ must be a bounded linear operator, which is also $\mathfrak{A}$-linear
in the sense that $T(xa)=(Tx)a$, for $x\in H$ and $a\in \mathfrak{A}$. We call ${\mathcal
L}(H,K)$ the set of adjointable operators from $H$ to $K$. For any
$T\in {\mathcal L}(H,K)$, the range and the null space of $T$ are denoted by
${\mathcal R}(T)$ and ${\mathcal N}(T)$, respectively. In case
$H=K$, ${\mathcal L}(H,H)$ which is abbreviated to ${\mathcal L}(H)$, is a
$C^*$-algebra. Let ${\mathcal L}(H)_+$ be the set of positive elements
in ${\mathcal L}(H)$, which can be characterized as follows:
\begin{lemma}\label{lem:positive operator}{{\rm \cite[Lemma 4.1]{lance}}} Let $H$ be a Hilbert $\mathfrak{A}$-module and $T\in {\mathcal L}(H)$. Then $T\in {\mathcal L}(H)_+$ if and only if
$\langle Tx, x \rangle\ge 0$ \ for all $x$ in $H$.
\end{lemma}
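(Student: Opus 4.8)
The plan is to prove the two implications separately, relying only on the inner-product axioms (i)--(iv) and on the fact, recorded above, that $\mathcal{L}(H)$ is a $C^*$-algebra, so that the continuous functional calculus for self-adjoint elements is at our disposal. The forward direction is the routine one: if $T\in\mathcal{L}(H)_+$, then, being a positive element of a $C^*$-algebra, $T$ factors as $T=S^*S$ for some $S\in\mathcal{L}(H)$. The adjoint relation (together with $S^{**}=S$) gives $\langle Tx,x\rangle=\langle S^*Sx,x\rangle=\langle Sx,Sx\rangle$ for every $x\in H$, and this is $\ge 0$ by axiom~(iv).

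For the converse I would argue in two steps. First I would show that $T$ is self-adjoint. Because each $\langle Tx,x\rangle$ is positive, hence self-adjoint, in $\mathfrak{A}$, axiom~(iii) yields $\langle Tx,x\rangle=\langle Tx,x\rangle^*=\langle x,Tx\rangle$, while the defining relation of the adjoint (with $y=x$) gives $\langle Tx,x\rangle=\langle x,T^*x\rangle$. Subtracting, $\langle x,(T-T^*)x\rangle=0$ for all $x$. Since $H$ is a complex vector space, the form $(x,y)\mapsto\langle x,(T-T^*)y\rangle$ is sesquilinear over $\mathbb{C}$, so the complex polarization identity forces it to vanish identically; taking $x=(T-T^*)y$ and using axiom~(iv) then gives $T=T^*$.

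The second, and main, step is to promote self-adjointness to positivity. Here I would use the negative-part trick rather than any spectral decomposition of $T$ as an operator. Via the functional calculus in the $C^*$-algebra $\mathcal{L}(H)$, write $T=T_+-T_-$ with $T_\pm\ge 0$ and $T_+T_-=0$, so that $TT_-=-T_-^2$. For an arbitrary $x\in H$, put $y=T_-x$; then
\[
\langle Ty,y\rangle=\langle TT_-x,T_-x\rangle=-\langle T_-y,y\rangle .
\]
The forward direction applied to $T_-\ge 0$ shows the right-hand side is $\le 0$, while the hypothesis makes the left-hand side $\ge 0$; hence $\langle T_-y,y\rangle=0$. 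Factoring $T_-=(T_-^{1/2})^2$ with $T_-^{1/2}$ self-adjoint, this reads $\langle T_-^{1/2}y,T_-^{1/2}y\rangle=0$, so $T_-^{1/2}y=0$ and therefore $T_-y=T_-^2x=0$. As $x$ was arbitrary, $T_-^2=0$, whence $\|T_-\|^2=\|T_-^2\|=0$ and $T_-=0$. Thus $T=T_+\in\mathcal{L}(H)_+$.

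The genuinely module-specific points are slight: polarization is legitimate because the scalars are $\mathbb{C}$ even though the inner product is $\mathfrak{A}$-valued, and the implications ``$\langle v,v\rangle=0\Rightarrow v=0$'' rest on axiom~(iv). The only place demanding care is the converse, where one must resist invoking a spectral theorem for $T$ (unavailable at the operator level) and instead read off positivity from the scalar inequalities through the negative part $T_-$; this is the step I expect to be the crux.
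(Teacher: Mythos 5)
Your proof is correct and complete. Note that the paper does not prove this lemma at all --- it is quoted as \cite[Lemma~4.1]{lance} --- and your argument (factoring $T=S^*S$ for the easy direction; polarization of the $\mathfrak{A}$-valued sesquilinear form to get $T=T^*$, then the negative-part trick $TT_-=-T_-^2$ forcing $T_-=0$, for the converse) is essentially the standard proof given in that cited reference, so the two approaches coincide.
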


For any $C^*$-algebra $\mathfrak{B}$, let $\mathfrak{B}_+$ be the set of all positive elements of $\mathfrak{B}$.
Trivial as it is, the following lemma is stated for the sake of completeness.

\begin{lemma}\label{lem:C-star elementary}{\rm \cite[Proposition~1.3.5]{pedersen}}\ Let $\mathfrak{B}$ be a $C^*$-algebra and $x,y\in \mathfrak{B}_+$ be such that $x\le y$. Then $\Vert x\Vert\le \Vert y\Vert$ and $z^*xz\le z^*yz$ for
each $z \in \mathfrak{B}$.
\end{lemma}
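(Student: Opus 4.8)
The plan is to prove the two assertions separately, each by an elementary application of the continuous functional calculus, carried out when necessary in the unitization $\widetilde{\mathfrak{B}}$ of $\mathfrak{B}$ with unit denoted $1$.

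For the norm inequality $\Vert x\Vert\le\Vert y\Vert$, I would first record the standard fact that any $w\in\mathfrak{B}_+$ satisfies $w\le\Vert w\Vert\cdot 1$ in $\widetilde{\mathfrak{B}}$: since $w\ge 0$ its spectrum lies in $[0,\Vert w\Vert]$, so the function $t\mapsto\Vert w\Vert-t$ is nonnegative on $\sigma(w)$ and hence $\Vert w\Vert\cdot 1-w\ge 0$ by functional calculus. Applying this to $w=y$ and combining with the hypothesis $x\le y$ gives $x\le\Vert y\Vert\cdot 1$, that is, $\Vert y\Vert\cdot 1-x\ge 0$. By the spectral mapping theorem this forces $\sigma(x)\subseteq(-\infty,\Vert y\Vert]$, while $x\ge 0$ gives $\sigma(x)\subseteq[0,\infty)$; hence $\sigma(x)\subseteq[0,\Vert y\Vert]$. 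Since the norm of a positive element equals its spectral radius, $\Vert x\Vert=\max\sigma(x)\le\Vert y\Vert$, as desired.

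For the inequality $z^*xz\le z^*yz$, I would use that the positive element $y-x$ admits a positive square root $c:=(y-x)^{1/2}\in\mathfrak{B}_+$, furnished again by the continuous functional calculus (the function $t\mapsto t^{1/2}$ vanishes at $0$, so $c$ lies in $\mathfrak{B}$ itself even when $\mathfrak{B}$ is non-unital). Then for any $z\in\mathfrak{B}$, using $c=c^*$, one computes $z^*(y-x)z=z^*c^2z=(cz)^*(cz)\ge 0$, and therefore $z^*xz\le z^*yz$.

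Both arguments are routine; the only point demanding a little care is the non-unital case, where one must pass to $\widetilde{\mathfrak{B}}$ merely to make sense of $\Vert y\Vert\cdot 1$ and of the spectra, while observing that the relevant square root stays inside $\mathfrak{B}$. I therefore expect no genuine obstacle beyond the correct invocation of these standard spectral facts.
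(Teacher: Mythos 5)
Your proof is correct. The paper offers no argument of its own for this lemma --- it is stated ``for the sake of completeness'' with only a citation to \cite[Proposition~1.3.5]{pedersen} --- and your two-step argument (the spectral estimate $x\le \Vert y\Vert\cdot 1$ in the unitization for the norm inequality, and the factorization $z^*(y-x)z=(cz)^*(cz)$ with $c=(y-x)^{1/2}$ for the conjugation inequality) is precisely the standard textbook proof that the citation points to, so there is nothing of substance to compare.
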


Along the same line in the proof of \cite[Lemma~3.4]{lance}, we get a lemma as follows:
\begin{lemma}\label{lem:lance p25} Let $\mathfrak{B}$ be a $C^*$-algebra and $a,b\in \mathfrak{B}_+$ be such that
\begin{equation}\label{equ:norm ac bc inequality}\Vert ac\Vert\le \Vert bc\Vert \ \mbox{for all $c \in \mathfrak{B}_+$}.\end{equation}
Then $a^2\le b^2$.
\end{lemma}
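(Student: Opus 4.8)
The plan is to reduce the order inequality $a^2\le b^2$ to a single norm estimate that the hypothesis \eqref{equ:norm ac bc inequality} can be fed into. Since the cone of positive elements is norm-closed and $b^2-a^2$ lies in $\mathfrak{B}$, it suffices to prove $a^2\le b^2+\epsilon\mathbf{1}$ for every $\epsilon>0$, where the unit $\mathbf{1}$ is adjoined in the unitization $\widetilde{\mathfrak{B}}$ when $\mathfrak{B}$ is nonunital. Fix $\epsilon>0$ and set $r_\epsilon=(b^2+\epsilon\mathbf{1})^{-1/2}\in\widetilde{\mathfrak{B}}_+$, which is positive and invertible. Conjugating by the invertible positive element $r_\epsilon$ (Lemma~\ref{lem:C-star elementary}, applied with $z=r_\epsilon$ for the forward implication and with $z=r_\epsilon^{-1}$ for the converse) gives
\begin{equation*}
a^2\le b^2+\epsilon\mathbf{1}\iff r_\epsilon a^2 r_\epsilon\le\mathbf{1}\iff\|a r_\epsilon\|\le 1,
\end{equation*}
the last equivalence holding because $r_\epsilon a^2 r_\epsilon=(a r_\epsilon)^*(a r_\epsilon)$ is positive with norm $\|a r_\epsilon\|^2$. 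Thus everything reduces to the single estimate $\|a r_\epsilon\|\le 1$.

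This estimate is engineered to match \eqref{equ:norm ac bc inequality} on the nose. Indeed, the continuous functional calculus yields the spectral bound
\begin{equation*}
\|b r_\epsilon\|=\sup_{t\in\sigma(b)}\frac{t}{\sqrt{t^2+\epsilon}}\le 1,
\end{equation*}
so once $r_\epsilon$ is used as the test element $c$ in \eqref{equ:norm ac bc inequality} we get $\|a r_\epsilon\|\le\|b r_\epsilon\|\le 1$, exactly as wanted. When $\mathfrak{B}$ is unital this already finishes the argument, since then $r_\epsilon\in\mathfrak{B}_+$ and \eqref{equ:norm ac bc inequality} applies verbatim.

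The main obstacle is therefore the nonunital case: here $r_\epsilon\in\widetilde{\mathfrak{B}}$ but $r_\epsilon\notin\mathfrak{B}$ (its image under the character annihilating $\mathfrak{B}$ equals $\epsilon^{-1/2}\neq 0$), so \eqref{equ:norm ac bc inequality} cannot be invoked with $c=r_\epsilon$ directly. I would overcome this by approximating $r_\epsilon$ by a net $c_\mu\in\mathfrak{B}_+$, applying \eqref{equ:norm ac bc inequality} to each $c_\mu$, and passing to the limit. The delicate point—and the crux of the whole proof—is that such approximants can converge to $r_\epsilon$ only in the strong operator topology of a faithful representation $\widetilde{\mathfrak{B}}\subseteq\mathbb{B}(\mathcal{H})$, never in norm; the limit must then be taken through the strong-operator lower semicontinuity of the norm on the $a r_\epsilon$ side while the uniform bound $\|b c_\mu\|\le 1$ is retained on the $b$ side, and the part of $\sigma(b)$ near $0$ (which functions of $b$ alone cannot see) requires separate care. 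One clean way to carry out this localization is to test \eqref{equ:norm ac bc inequality} against an excising net $(c_\mu)\subseteq\mathfrak{B}_+$ for each pure state $\phi$, using $\|a c_\mu\|^2=\|c_\mu a^2 c_\mu\|\to\phi(a^2)$ and $\|b c_\mu\|^2\to\phi(b^2)$ to conclude $\phi(a^2)\le\phi(b^2)$ for all pure states. Once $\|a r_\epsilon\|\le 1$ is secured for every $\epsilon>0$, letting $\epsilon\downarrow 0$ gives $a^2\le b^2$.
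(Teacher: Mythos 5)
Your proposal is correct in substance but follows a genuinely different route from the paper, and the part you label as the crux is organized around a difficulty that is not really there. The paper's proof is a direct adaptation of \cite[Lemma~3.4]{lance}: assuming $a^2\le b^2$ fails, it sets $m=\max\{t: t\in\mbox{sp}(a^2-b^2)\}>0$, takes the single test element $c=f(a^2-b^2)\in\mathfrak{B}_+$ for a continuous $f$ vanishing on $(-\infty,\frac{m}{2}]$ and equal to $1$ on $[m,+\infty)$, and shows via functional calculus and a norm-attaining state that this $c$ violates \eqref{equ:norm ac bc inequality}; this is elementary, needs no unitization, and makes no case distinction. Your argument is instead Douglas's regularization trick: test \eqref{equ:norm ac bc inequality} on $r_\epsilon=(b^2+\epsilon\mathbf{1})^{-1/2}$, get $\Vert ar_\epsilon\Vert\le\Vert br_\epsilon\Vert\le 1$, deduce $a^2\le b^2+\epsilon\mathbf{1}$, and let $\epsilon\downarrow 0$; in the unital case this is complete and correct. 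In the nonunital case, your fallback via excision of pure states (Akemann--Anderson--Pedersen) does work: an excising net $(c_\mu)\subseteq\mathfrak{B}_+$ of norm-one elements gives $\Vert ac_\mu\Vert^2\to\phi(a^2)$ and $\Vert bc_\mu\Vert^2\to\phi(b^2)$, hence $\phi(a^2)\le\phi(b^2)$ for every pure state $\phi$, and pure states detect positivity --- but note this yields $a^2\le b^2$ outright, so routing the conclusion back through $\Vert ar_\epsilon\Vert\le 1$ is redundant. More importantly, your diagnosis that approximants of $r_\epsilon$ ``can converge only in the strong operator topology'' and force semicontinuity arguments is a misdiagnosis: if $(u_\mu)$ is an approximate unit of $\mathfrak{B}$ and $c\in\widetilde{\mathfrak{B}}_+$, then $c_\mu=u_\mu c u_\mu\in\mathfrak{B}_+$, and since $ac$ and $bc$ lie in the ideal $\mathfrak{B}$, one has $\Vert ac_\mu-ac\Vert\to 0$ and $\Vert bc_\mu-bc\Vert\to 0$ \emph{in norm}; thus \eqref{equ:norm ac bc inequality} automatically extends to all $c\in\widetilde{\mathfrak{B}}_+$, and your unital argument applies verbatim with $c=r_\epsilon$, with no representation theory and no excision. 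In summary, your approach buys a very short proof modulo unitization, at the price of either the excision theorem or the (easy, but omitted) approximate-unit extension, whereas the paper's construction is uniform and entirely self-contained.
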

\begin{proof}Without loss of generality, we may assume that $\Vert a\Vert\le 1$ and $\Vert b\Vert\le 1$. Suppose that the inequality $a^2\le b^2$ is not true. Then there exists $t_0\in \mbox{sp}(a^2-b^2)$ such that $t_0>0$, where sp$(a^2-b^2)$ denotes the spectrum of $a^2-b^2$.
It follows that
\begin{equation*}m=\max\left\{t: t\in\mbox{sp}(a^2-b^2)\right\}>0.
\end{equation*}

Let $f$ be any continuous real-valued function defined on the real line such that
\begin{eqnarray*}&&0\le f(t)\le 1 \ \mbox{for all $t\in (-\infty, +\infty)$},\\
&&f(t)=0\ \mbox{for all $t\in (-\infty, \frac{m}{2}]$},\\
&&f(t)=1\ \mbox{for all $t\in [m, +\infty)$}.
\end{eqnarray*}
Put $c=f(a^2-b^2)$. Then $c\in \mathfrak{B}_+$ since $f$ is nonnegative. By use of the functional calculus \cite[Sec.\,1.1]{pedersen}, we have
\begin{equation}\label{equ:equals m}\Vert c(a^2-b^2)c\Vert=\max\left\{tf(t)^2: t\in \mbox{sp}(a^2-b^2)\right\}=m>0.\end{equation}
Since $t>\frac{m}{2}$ whenever $f(t)\ne 0$, it follows again by use of the functional calculus that
\begin{equation}\label{equ:inequality-2}c(a^2-b^2)c\ge \frac{m}{2}\, c^2.\end{equation}
Note that $c b^2 c$ is self-adjoint, so there exists a state $\rho$ acting on $\mathfrak{B}$ such that $\rho(c b^2 c)=\Vert c b^2 c\Vert$. Then by
\eqref{equ:inequality-2} and the assumption $\Vert b\Vert\le 1$, we have
\begin{equation*}\Vert ca^2 c\Vert\ge \rho(ca^2 c)\ge \rho(cb^2 c)+\frac{m}{2}\,\rho(c^2)\ge \frac{m+2}{2}\rho(cb^2c)=\frac{m+2}{2}\, \Vert c b^2 c\Vert.
\end{equation*}
This shows that if $cb^2c\ne 0$, then
$\Vert ca^2 c\Vert>\Vert cb^2 c\Vert$. On the other hand, if $cb^2c=0$, then it follows from \eqref{equ:equals m}
that $\Vert ca^2 c\Vert >0$. So in either case we have
\begin{equation*}\Vert ac\Vert^2=\Vert ca^2 c\Vert>\Vert cb^2 c\Vert=\Vert bc\Vert^2,\end{equation*}
in contradiction to the assumption that $\Vert ac\Vert\le \Vert bc\Vert.$
\end{proof}

Now, we state the main result of this section as follows:
\begin{theorem}\label{thm:main thm-1} Let $E, H$ and $K$ be Hilbert $\mathfrak{A}$-modules, $T\in {\mathcal L}(E,K)$ and $S\in {\mathcal L}(H,K)$. Then the following two statements are equivalent:
\begin{enumerate}
\item[{\rm (i)}] $TT^*\le SS^*$;
\item[{\rm (ii)}] $\Vert T^*x\Vert \le \Vert S^*x\Vert$ for all $x \in K$.
\end{enumerate}
\end{theorem}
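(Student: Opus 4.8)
The plan is to prove the two implications separately: (i)$\Rightarrow$(ii) is elementary and rests only on Lemmas~\ref{lem:positive operator} and \ref{lem:C-star elementary}, while the substantive direction (ii)$\Rightarrow$(i) is where I would invoke Lemma~\ref{lem:lance p25}.

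First I would dispose of (i)$\Rightarrow$(ii). Assuming $TT^*\le SS^*$, the operator $SS^*-TT^*$ is positive, so by Lemma~\ref{lem:positive operator} we have $\langle(SS^*-TT^*)x,x\rangle\ge 0$ for every $x\in K$, that is, $\langle T^*x,T^*x\rangle\le\langle S^*x,S^*x\rangle$ in $\mathfrak{A}_+$. Applying the norm monotonicity of Lemma~\ref{lem:C-star elementary} then gives $\|T^*x\|^2=\|\langle T^*x,T^*x\rangle\|\le\|\langle S^*x,S^*x\rangle\|=\|S^*x\|^2$, which is (ii).

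For (ii)$\Rightarrow$(i) I would work inside the $C^*$-algebra $\mathfrak{B}=\mathcal{L}(K)$ and set $a=(TT^*)^{1/2}$ and $b=(SS^*)^{1/2}$, two positive elements with $a^2=TT^*$ and $b^2=SS^*$. The point of this choice is that Lemma~\ref{lem:lance p25} reduces the desired inequality $a^2\le b^2$, i.e.\ exactly (i), to verifying its hypothesis $\|ac\|\le\|bc\|$ for every $c\in\mathcal{L}(K)_+$. To check this I would rewrite both norms so as to expose $T^*$ and $S^*$: for $c\in\mathcal{L}(K)_+$ we have $c^*=c$ and $a^*=a$, so
\begin{equation*}\|ac\|^2=\|(ac)^*(ac)\|=\|ca^2c\|=\|cTT^*c\|=\|(T^*c)^*(T^*c)\|=\|T^*c\|^2,\end{equation*}
and likewise $\|bc\|=\|S^*c\|$. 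Thus the required inequality becomes the operator-norm bound $\|T^*c\|\le\|S^*c\|$.

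The remaining step, which I expect to be the crux, is to promote the pointwise hypothesis (ii) to this operator-norm statement. For each $y\in K$ I would apply (ii) with the vector $x=cy$ to obtain $\|T^*cy\|\le\|S^*cy\|\le\|S^*c\|\,\|y\|$, and then take the supremum over $\|y\|\le 1$ to conclude $\|T^*c\|\le\|S^*c\|$. Combining this with the two norm identities yields $\|ac\|\le\|bc\|$ for all $c\in\mathcal{L}(K)_+$, so Lemma~\ref{lem:lance p25} gives $a^2\le b^2$, namely $TT^*\le SS^*$. The only care needed throughout is that each $C^*$-identity manipulation uses self-adjointness of $a$, $b$ and $c$, which holds because all three are positive.
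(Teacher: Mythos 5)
Your proof is correct, but the hard direction (ii)$\Longrightarrow$(i) takes a genuinely different route from the paper's. The paper localizes at each $x\in K$ and works inside the coefficient algebra $\mathfrak{A}$: it sets $a=\langle TT^*x,x\rangle$, $b=\langle SS^*x,x\rangle\in\mathfrak{A}_+$, uses the module identity $c\langle TT^*x,x\rangle c=\langle TT^*(xc),xc\rangle$ to verify (via hypothesis (ii) applied at the module element $xc$) that $\Vert a^{1/2}c\Vert\le\Vert b^{1/2}c\Vert$ for all $c\in\mathfrak{A}_+$, concludes $a\le b$ from Lemma~\ref{lem:lance p25}, and then needs Lemma~\ref{lem:positive operator} a second time to pass from $\langle TT^*x,x\rangle\le\langle SS^*x,x\rangle$ for all $x$ to the operator inequality $TT^*\le SS^*$. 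You instead apply Lemma~\ref{lem:lance p25} once, in the C*-algebra $\mathcal{L}(K)$, with $a=(TT^*)^{1/2}$ and $b=(SS^*)^{1/2}$, so that its conclusion $a^2\le b^2$ \emph{is} statement (i) with no further positivity argument; the price is that you must promote the pointwise hypothesis (ii) to the operator-norm bound $\Vert T^*c\Vert\le\Vert S^*c\Vert$, which your supremum argument does correctly, and you rely on the C*-identity $\Vert A^*A\Vert=\Vert A\Vert^2$ for adjointable operators between \emph{different} modules (to get $\Vert cTT^*c\Vert=\Vert T^*c\Vert^2$) --- a standard fact from Lance's book, though not recorded among the paper's lemmas, so it deserves an explicit citation. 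Your version is shorter, avoids the module-multiplication computation entirely, and exploits the fact that Lemma~\ref{lem:lance p25} was stated for an arbitrary C*-algebra; the paper's version keeps all the analysis in $\mathfrak{A}$, where hypothesis (ii) can be fed in directly at module elements without any passage to operator norms. Both are complete proofs.
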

\begin{proof}``(i)$\Longrightarrow$(ii)" follows directly from Lemmas~\ref{lem:positive operator} and \ref{lem:C-star elementary}.

``(ii)$\Longrightarrow$(i)": Given any $x\in K$, let $a=\langle TT^*x,x\rangle, b=\langle SS^*x,x\rangle$. Then $a,b\in \mathfrak{A}_+$, and for any $c\in \mathfrak{A}_+$, it holds that
\begin{eqnarray*}\Vert a^\frac12 c\Vert^2&=&\Vert cac\Vert=\Vert c\langle TT^*x,x\rangle c\Vert=\Vert \langle TT^*(xc),(xc)\rangle\Vert\\
&\le& \Vert \langle SS^*(xc),(xc)\rangle\Vert=\Vert b^\frac12 c\Vert^2.
\end{eqnarray*}
Therefore, by Lemma~\ref{lem:lance p25}, we have $a\le b$, and thus
$$\langle TT^*x,x\rangle\le \langle SS^*x,x\rangle\ \mbox{for all $x \in K$},$$
which means, by Lemma~\ref{lem:positive operator}, that $TT^*\le SS^*$.
\end{proof}

A direct application of the preceding theorem is as follows:
\begin{corollary}\label{cor:equivalence of i and ii} Let $E, H$ and $K$ be Hilbert $\mathfrak{A}$-modules, $T\in {\mathcal L}(E,K)$ and $S\in {\mathcal L}(H,K)$. Then the following two statements are equivalent:
\begin{enumerate}
\item[{\rm (i)}] $T^\prime (T^\prime)^*\le \lambda TT^*$ for some $\lambda>0$;
\item[{\rm (ii)}]There exists $\mu>0$ such that $\Vert (T^\prime)^*z\Vert\le \mu \Vert T^*z\Vert$ for all $z \in K$.
\end{enumerate}
\end{corollary}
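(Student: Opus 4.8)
The plan is to deduce this directly from Theorem~\ref{thm:main thm-1} by absorbing the positive scalar into the operator, so that no new work beyond elementary bookkeeping is required. The key observation is that a positive scalar multiple of $TT^*$ is itself of the form (operator)$\cdot$(operator)$^*$: for any $\lambda>0$ the operator $\sqrt{\lambda}\,T$ lies in ${\mathcal L}(E,K)$ with adjoint $(\sqrt{\lambda}\,T)^*=\sqrt{\lambda}\,T^*$, whence $\lambda TT^*=(\sqrt{\lambda}\,T)(\sqrt{\lambda}\,T)^*$. Likewise $\mu\Vert T^*z\Vert=\Vert(\mu T)^*z\Vert$ for every $z\in K$. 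Thus the two conditions of the corollary are precisely the two conditions of Theorem~\ref{thm:main thm-1} applied to the rescaled pair, once we link the constants by $\mu=\sqrt{\lambda}$.

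Concretely, for the implication ``(i)$\Longrightarrow$(ii)'' I would start from $T^\prime(T^\prime)^*\le\lambda TT^*$, rewrite the right-hand side as $(\sqrt{\lambda}\,T)(\sqrt{\lambda}\,T)^*$, and apply Theorem~\ref{thm:main thm-1} with $\sqrt{\lambda}\,T$ playing the role of $S$ and $T^\prime$ playing the role of $T$. This yields $\Vert(T^\prime)^*z\Vert\le\Vert(\sqrt{\lambda}\,T)^*z\Vert=\sqrt{\lambda}\,\Vert T^*z\Vert$ for all $z\in K$, so (ii) holds with $\mu=\sqrt{\lambda}$. For the reverse implication ``(ii)$\Longrightarrow$(i)'', starting from $\Vert(T^\prime)^*z\Vert\le\mu\Vert T^*z\Vert=\Vert(\mu T)^*z\Vert$ for all $z\in K$, the same theorem (now with $\mu T$ in the role of $S$) gives $T^\prime(T^\prime)^*\le(\mu T)(\mu T)^*=\mu^2 TT^*$, so (i) holds with $\lambda=\mu^2$.

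Since both directions reduce verbatim to Theorem~\ref{thm:main thm-1}, I do not anticipate any genuine obstacle. The only points meriting explicit mention are the elementary facts that $\sqrt{\lambda}\,T\in{\mathcal L}(E,K)$ is adjointable with $(\sqrt{\lambda}\,T)^*=\sqrt{\lambda}\,T^*$, that $\lambda TT^*\in{\mathcal L}(K)_+$, and the scalar correspondence $\lambda=\mu^2$; each of these is routine and requires no further argument.
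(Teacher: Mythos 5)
Your proposal is correct and matches the paper's intent exactly: the paper offers no separate proof of this corollary, presenting it as ``a direct application of the preceding theorem,'' and your rescaling argument ($\lambda TT^*=(\sqrt{\lambda}\,T)(\sqrt{\lambda}\,T)^*$, $\mu\Vert T^*z\Vert=\Vert(\mu T)^*z\Vert$, with $\lambda=\mu^2$) is precisely that application spelled out. The only cosmetic point is that the corollary's hypothesis names the second operator $S\in{\mathcal L}(H,K)$ while the conditions refer to $T^\prime$; you correctly read these as the same operator.
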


\begin{remark}\label{rem:construction of operator V} Let $E, H$ and $K$ be Hilbert $\mathfrak{A}$-modules, and let $T\in {\mathcal L}(E,K)$ and $T^\prime\in {\mathcal L}(H,K)$ be such that $T^\prime (T^\prime)^*\le \lambda_0 TT^*$ for some $\lambda_0>0$. Put
$$\alpha=\inf\{\lambda: T^\prime (T^\prime)^*\le \lambda TT^*\}.$$ Since $\mathcal{L}(K)_+$ is a closed cone, we have $T^\prime (T^\prime)^*\le \alpha TT^*=\left(\sqrt{\alpha}T\right)\left(\sqrt{\alpha}T\right)^*$. It follows from
Theorem~\ref{thm:main thm-1} that
\begin{equation}\label{equ:pre-well defined}\Vert (T^\prime)^*z\Vert\le \sqrt{\alpha}\, \Vert T^*z\Vert\ \mbox{for all $z \in K$}.\end{equation}
Put $V_1: \mathcal{R}(T^*)\to \mathcal{R}\left(\left(T^\prime\right)^*\right)$, $V_1(T^* z)=(T^\prime)^*z$ for any $z\in K$. Then
by \eqref{equ:pre-well defined} $V_1$ is well-defined and can be extended to be a bounded linear operator $V$ from
$\overline{\mathcal{R}(T^*)}\oplus \mathcal{N}(T)$ to $H$, such that $V\big|_{\mathcal{N}(T)}=0$ and $\Vert V\Vert\le \sqrt{\alpha}$. For any $z \in K$, it holds that
\begin{equation*}\Vert (T^\prime)^*z\Vert=\Vert V(T^*z)\Vert \le \Vert V\Vert \, \Vert T^*z\Vert
=\left\Vert \left(\Vert V\Vert T\right)^*z\right\Vert.\end{equation*}
Once again from Theorem~\ref{thm:main thm-1} we conclude that $T^\prime (T^\prime)^*\le \Vert V\Vert ^2 TT^*$. Hence
$\alpha\le \Vert V\Vert ^2$ and furthermore $\alpha=\Vert V\Vert ^2$. By definition we have $VT^*=(T^\prime)^*$, it may however happen that $T^\prime\ne T V^*$ since $V^*$ may not exist.
\end{remark}

\section{Characterization of the orthogonally complemented condition}\label{sec:orthogonally complemented condition}
Let $E, K$ be Hilbert $\mathfrak{A}$-modules and $T$ be in $\mathcal{L}(E,K)$. In this section, we study
the orthogonally complemented condition for $\overline{{\mathcal R}(T^*)}$, and the equivalence of conditions (iii) and (iv) in Theorem~\ref{thm:fang s result}.

\begin{lemma}\label{lem:closeness implies orthogonality}{\rm \cite[Theorem~3.2]{lance}} Let $E, K$ be Hilbert $\mathfrak{A}$-modules and suppose that $T \in \mathcal{L}(E,K)$ has closed range. Then $\mathcal{R}(T)$ is orthogonally complemented.
\end{lemma}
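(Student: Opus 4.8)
The plan is to reduce everything to the single positive operator $Q := TT^* \in \mathcal{L}(K)_+$ and then exploit a spectral gap of $Q$ at $0$ through the continuous functional calculus. First I would record the elementary facts $\mathcal{N}(Q) = \mathcal{N}(T^*)$ and $\mathcal{R}(Q) \subseteq \mathcal{R}(T)$, together with the orthogonality $\mathcal{R}(T) \perp \mathcal{N}(T^*)$, all of which follow at once from the identity $\langle T^*y, T^*y\rangle = \langle Qy, y\rangle$ and the adjoint relation $\langle Tx, y\rangle = \langle x, T^*y\rangle$. The genuine work is to prove that $\mathcal{R}(Q)$ is closed; once this is in hand the conclusion follows quickly.

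Next I would establish that $Q$ is bounded below on the closed submodule $M := \mathcal{R}(T)$. Since $M$ is closed it is a Banach space and $T : E \to M$ is a surjective bounded map, so the open mapping theorem supplies a constant $\gamma > 0$ such that every $y \in M$ admits a preimage $x$ with $Tx = y$ and $\|x\| \le \gamma \|y\|$. Inserting this into the Cauchy--Schwarz estimate $\|y\|^2 = \|\langle x, T^*y\rangle\| \le \|x\|\,\|T^*y\|$ gives $\|T^*y\| \ge \gamma^{-1}\|y\|$, and then $\|Qy\|\,\|y\| \ge \|\langle Qy,y\rangle\| = \|T^*y\|^2 \ge \gamma^{-2}\|y\|^2$ yields $\|Qy\| \ge \gamma^{-2}\|y\|$ for all $y \in M$. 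Because $\mathcal{R}(Q) \subseteq M$, the operator $Q$ is in particular bounded below on $\overline{\mathcal{R}(Q)}$.

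From this I would deduce that $0$ is isolated in, or absent from, $\mathrm{sp}(Q) \subseteq [0,\infty)$: if $0$ were an accumulation point, then choosing $\lambda_n \in \mathrm{sp}(Q)$ with $\lambda_n \downarrow 0$ and continuous bump functions $g_n$ with $0 \le g_n \le 1$, $g_n(\lambda_n)=1$, $g_n(0)=0$ and supports shrinking onto $\lambda_n$, the elements $g_n(Q)$ would be nonzero with ranges inside $\overline{\mathcal{R}(Q)}$ yet satisfy $\|Q\,g_n(Q)\| \to 0$, contradicting the lower bound just obtained. With $0$ isolated, the indicator $\chi_{\{0\}}$ is continuous on $\mathrm{sp}(Q)$, so $P := \chi_{\{0\}}(Q)$ is a self-adjoint idempotent in $\mathcal{L}(K)$ with $\mathcal{R}(P) = \mathcal{N}(Q)$; taking $r(t) = t^{-1}$ for $t \ne 0$ and $r(0)=0$, one checks $Q\,r(Q) = I - P$, whence $\mathcal{R}(I-P) = \mathcal{R}(Q)$ is closed and $K = \mathcal{N}(Q) \oplus \mathcal{R}(Q)$ orthogonally. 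Finally, combining $\mathcal{R}(Q) \subseteq \mathcal{R}(T)$ with $\mathcal{R}(T) \subseteq \mathcal{N}(T^*)^\perp = \mathcal{R}(P)^\perp = \mathcal{R}(I-P) = \mathcal{R}(Q)$ forces $\mathcal{R}(T) = \mathcal{R}(Q)$, which is therefore orthogonally complemented. I expect the spectral-gap step --- promoting boundedness below to isolation of $0$ in $\mathrm{sp}(Q)$ via the functional calculus --- to be the main obstacle, since that is where the $C^*$-algebraic facts must be handled with care; the open mapping argument and the closing identification of ranges are comparatively routine.
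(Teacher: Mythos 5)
Your proof is correct, but a comparison with ``the paper's proof'' is not quite possible: the paper offers no proof of this lemma at all, quoting it directly from Lance's book \cite[Theorem~3.2]{lance}. Measured against Lance's argument, your proposal starts the same way --- the open mapping theorem applied to the surjection $T\colon E\to M=\mathcal{R}(T)$, followed by the Cauchy--Schwarz computation giving $\Vert T^*y\Vert\ge\gamma^{-1}\Vert y\Vert$ and hence $\Vert Qy\Vert\ge\gamma^{-2}\Vert y\Vert$ on $M$ --- but then diverges in how the projection is manufactured. Lance restricts $TT^*$ to the closed submodule $G=\mathcal{R}(T)$, observes that this restriction is a positive element of $\mathcal{L}(G)$ which is bounded below and hence invertible in $\mathcal{L}(G)$, and writes the projection onto $G$ explicitly in terms of that inverse; this requires some care in checking that the restricted and corestricted maps one composes are genuinely adjointable, a point that cannot be taken for granted in the Hilbert module setting. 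You instead stay inside the single $C^*$-algebra $\mathcal{L}(K)$, accept that $0$ may lie in $\mathrm{sp}(Q)$, and show it must then be an isolated point, so that $P=\chi_{\{0\}}(Q)$ exists by continuous functional calculus; all subsequent operators are manifestly adjointable, which is what your route buys at the cost of the isolated-point analysis. Two small steps deserve to be made explicit in a final write-up: (1) the inclusion $\mathcal{R}(g_n(Q))\subseteq\overline{\mathcal{R}(Q)}$ uses that $g_n$ vanishes on a \emph{neighborhood} of $0$ (not merely at $0$), so that $g_n(t)=t\,h_n(t)$ with $h_n$ continuous and $g_n(Q)=Q\,h_n(Q)$; and (2) the equality $\mathcal{R}(I-P)=\mathcal{R}(Q)$ needs both inclusions, the identity $Q\,r(Q)=I-P$ giving $\mathcal{R}(I-P)\subseteq\mathcal{R}(Q)$, and $PQ=0$, i.e.\ $Q=(I-P)Q$, giving the reverse. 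Neither point is an obstacle; your spectral-gap step, which you flagged as the main risk, is in fact the standard bump-function argument and is sound.
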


Now for any $T\in\mathcal{L}(E,K)$, let
 \begin{equation}\label{equ:defn of G and S}G=\overline{\mathcal{R}(T^*)} \ \mbox{and}\ S\ \mbox{be the restriction of $T$ on $G$.}\end{equation}
Clearly, ${\mathcal R}(S)\subseteq {\mathcal R}(T)$ and $S\in\mathcal{L}(G,K)$ which satisfies
\begin{equation}\label{equ:characterization of the star of S}S^*u=T^* u\ \mbox{for all $u \in K$}.\end{equation}

\begin{theorem} \label{thm:two orthogonally complemented conditions} Let $E, K$ be Hilbert $\mathfrak{A}$-modules and $T$ be in $\mathcal{L}(E,K)$. Then the following statements are equivalent:
\begin{enumerate}
\item[{\rm (i)}] $\overline{{\mathcal R}(T^*)}$ is orthogonally complemented;
\item[{\rm (ii)}] For any Hilbert $\mathfrak{A}$-module $H$ and any $T^\prime\in {\mathcal L}(H,K)$, the equation
$T^\prime =TX$ for $X\in {\mathcal L}(H,E)$ is solvable whenever ${\mathcal R}(T^\prime)\subseteq {\mathcal R}(T)$;
\item[{\rm (iii)}] The equation
$S=TX$ for $X \in {\mathcal L}(G,E)$ is solvable, where $G$ and $S$ are defined by \eqref{equ:defn of G and S}.
\end{enumerate}
If condition (i) is satisfied and $T^\prime\in {\mathcal L}(H,K)$ is such that ${\mathcal R}(T^\prime)\subseteq {\mathcal R}(T)$, then there exists a unique $D\in {\mathcal L}(H,E)$ which satisfies
\begin{equation}\label{equ:uniqueness of D}T^\prime=TD\ \mbox{and}\ {\mathcal R}(D)\subseteq {\mathcal N}(T)^\bot.\end{equation}
In this case,
\begin{equation}\label{equ:additional conditions of D}\Vert D\Vert^2=\inf\left\{\lambda : T^\prime(T^\prime)^*\le \lambda TT^*\right\}.\end{equation}
\end{theorem}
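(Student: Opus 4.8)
The plan is to prove the cycle (i)$\Rightarrow$(ii)$\Rightarrow$(iii)$\Rightarrow$(i) and then treat the uniqueness and norm statements separately, drawing throughout on Corollary~\ref{cor:equivalence of i and ii} and on the construction in Remark~\ref{rem:construction of operator V}. The implication (ii)$\Rightarrow$(iii) is immediate: specialize (ii) to $H=G$ and $T^\prime=S$, which is legitimate because $\mathcal{R}(S)\subseteq\mathcal{R}(T)$ by the remark following \eqref{equ:defn of G and S}. For (i)$\Rightarrow$(ii), I would note that when $\overline{\mathcal{R}(T^*)}$ is orthogonally complemented the hypothesis of Theorem~\ref{thm:fang s result} is met, so the implications recorded as valid in the introduction, namely ``(iv)$\Rightarrow$(ii)'' and ``(ii)+(iv)$\Rightarrow$(iii)'', apply verbatim to any $T^\prime$ with $\mathcal{R}(T^\prime)\subseteq\mathcal{R}(T)$ and yield an adjointable solution of $T^\prime=TX$. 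Thus the only substantial direction is (iii)$\Rightarrow$(i).

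For (iii)$\Rightarrow$(i) the first step is to translate solvability into an adjointable retraction onto $G$. If $S=TX$ with $X\in\mathcal{L}(G,E)$, then taking adjoints and using $S^*=T^*$ from \eqref{equ:characterization of the star of S} gives $X^*T^*=T^*$, so $X^*$ fixes $\mathcal{R}(T^*)$ and, by continuity, restricts to the identity on $G=\overline{\mathcal{R}(T^*)}$; that is, $r:=X^*\in\mathcal{L}(E,G)$ is an adjointable retraction. Comparing $TXg=Tg$ shows that the tilt $m(g):=Xg-g$ lands in $\mathcal{N}(T)=\overline{\mathcal{R}(T^*)}^\bot=G^\bot$, and a short computation identifies $m$ as an adjointable map $G\to G^\bot$ with $m^*=r|_{G^\bot}$ and $rr^*=I_G+m^*m\ge I_G$. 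In particular $rr^*$ is invertible, so $P:=r^*(rr^*)^{-1}r$ is a self-adjoint idempotent in $\mathcal{L}(E)$; this already exhibits $\mathcal{R}(X)=\mathcal{R}(r^*)$ as an orthogonally complemented submodule, with $E=\mathcal{R}(X)\oplus\mathcal{N}(r)$. I would also record the identity $SS^*=TT^*$, which follows at once from $S^*=T^*$ and $\mathcal{R}(T^*)\subseteq G$.

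The main obstacle is precisely the passage from ``$\mathcal{R}(X)$ is complemented'' to ``$G$ is complemented'': since the tilt $m$ need not vanish, $\mathcal{R}(X)=\{g+m(g):g\in G\}$ is only a tilted, adjointably isomorphic copy of $G$ rather than $G$ itself, and the naive splitting $e=re+(e-re)$ is not orthogonal (one computes $\langle e-re,g^\prime\rangle=-\langle e,m(g^\prime)\rangle$). Equivalently, the task is to prove $E=\overline{\mathcal{R}(T^*)}+\mathcal{N}(T)$, or what is the same, $\mathcal{R}(T)=\mathcal{R}(S)$, so that every $Te$ may be written as $Tg$ with $g\in G$ and the difference absorbed into $\mathcal{N}(T)$. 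This is where the special structure of $S$ must be used: that the tilt is valued in $\mathcal{N}(T)$, that $S=T|_G$ is injective (as $\mathcal{N}(S)=G\cap G^\bot=\{0\}$), and that $SS^*=TT^*$, in order to replace $X$ by a solution whose range lies in $\overline{\mathcal{R}(T^*)}$, forcing $m=0$ and hence making the inclusion $G\hookrightarrow E$ itself adjointable. Carrying this reduction through is the heart of the argument and the step I expect to be most delicate.

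Finally, for the uniqueness and norm statements assume (i). Then $E=G\oplus G^\bot$ with $G^\bot=\mathcal{N}(T)$ and $\mathcal{N}(T)^\bot=G$; given a solution $X_0$ produced by (ii), the operator $D:=P_GX_0$, with $P_G$ the orthogonal projection onto $G$, satisfies $TD=T^\prime$ because $(I-P_G)X_0$ has range in $\mathcal{N}(T)$, while $\mathcal{R}(D)\subseteq G=\mathcal{N}(T)^\bot$. Uniqueness is then immediate: if $TD_1=TD_2$ with both ranges in $\mathcal{N}(T)^\bot$, then $\mathcal{R}(D_1-D_2)\subseteq\mathcal{N}(T)\cap\mathcal{N}(T)^\bot=\{0\}$. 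For \eqref{equ:additional conditions of D} I would connect $D$ with the operator $V$ of Remark~\ref{rem:construction of operator V}: under (i) one has $\overline{\mathcal{R}(T^*)}\oplus\mathcal{N}(T)=E$, so $V$ is defined on all of $E$, and the relations $D^*T^*=(T^\prime)^*=VT^*$ together with $D^*|_{\mathcal{N}(T)}=0=V|_{\mathcal{N}(T)}$ force $D^*=V$ on $G$ and on $G^\bot$, whence $V=D^*$. Consequently $\|D\|^2=\|V\|^2=\alpha=\inf\{\lambda:T^\prime(T^\prime)^*\le\lambda TT^*\}$, the value $\alpha=\|V\|^2$ having been established in that remark.
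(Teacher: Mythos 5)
Your proposal is correct where it is complete, and there it coincides with the paper's argument: (i)$\Rightarrow$(ii) is quoted from the proof of \cite[Theorem~1.1]{fang-yu-yao}, (ii)$\Rightarrow$(iii) is the same specialization $H=G$, $T^\prime=S$, and your operator $D=P_GX_0$ together with the identification $D^*=V$ is exactly the paper's final paragraph read in reverse (there $D=V^*$, with $V$ as in Remark~\ref{rem:construction of operator V}). The genuine gap is that the only substantial implication, (iii)$\Rightarrow$(i), is never proved: after constructing the retraction $r=X^*$ with $r|_G=\mathrm{id}_G$, the tilt $m:G\to\mathcal{N}(T)$, and the decomposition $E=\mathcal{R}(X)\oplus\mathcal{N}(X^*)$, you declare that the passage from ``$\mathcal{R}(X)$ is orthogonally complemented'' to ``$G$ is orthogonally complemented'' is the delicate heart of the argument and leave it as an expectation. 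A proof whose central step is announced but not carried out does not establish the theorem.

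Two further things you should know. First, the paper settles this step in one line: since $X^*z=z$ on $G$, the operator $X^*\in\mathcal{L}(E,G)$ has closed range $\mathcal{R}(X^*)=G$, and Lemma~\ref{lem:closeness implies orthogonality} is invoked to conclude that $G$ is orthogonally complemented. Second---and this vindicates your hesitation---that invocation proves less than it claims: Lemma~\ref{lem:closeness implies orthogonality} complements the range of an operator inside its \emph{codomain}, so applied to $X^*\in\mathcal{L}(E,G)$ it only says that $G$ is complemented in $G$, while the full strength of Lance's theorem yields precisely your decomposition $E=\mathcal{N}(X^*)\oplus\mathcal{R}(X)$, i.e., complementation of the tilted copy. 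In fact the obstruction you isolated cannot be removed, because the implication (iii)$\Rightarrow$(i) is false as stated. Let $\mathbb{B}=\mathbb{B}(\ell^2)$, $\mathbb{K}=\mathbb{K}(\ell^2)$, and let $s\in\mathbb{K}_+$ satisfy $\overline{s\mathbb{K}}=\mathbb{K}$ as in Proposition~\ref{prop:nonequivalence} (this forces $s$ to be injective). Take $\mathfrak{A}=\mathbb{B}$, $E=\mathbb{B}\oplus\mathbb{K}$, $K=\mathbb{K}$, and $T\in\mathcal{L}(E,K)$ defined by $T(x,y)=s(x+y)$, so that $T^*z=(sz,sz)$. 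Then
\begin{equation*}
G=\overline{\mathcal{R}(T^*)}=\left\{(k,k):k\in\mathbb{K}\right\},\qquad
\mathcal{N}(T)=G^\perp=\left\{(k,-k):k\in\mathbb{K}\right\},
\end{equation*}
hence $G\oplus G^\perp=\mathbb{K}\oplus\mathbb{K}\subsetneq E$ and (i) fails; yet the map $X(k,k)=(0,2k)$ belongs to $\mathcal{L}(G,E)$, being adjointable with $X^*(x,y)=(y,y)$, and $TX(k,k)=2sk=S(k,k)$, so (iii) holds. Here the tilt is $m(k,k)=(-k,k)\neq 0$, and no untilted solution can exist because the inclusion $G\hookrightarrow E$ is not adjointable. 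So the reduction you hoped to carry out (``replace $X$ by a solution with range in $\overline{\mathcal{R}(T^*)}$, forcing $m=0$'') is impossible in general; it becomes possible exactly when one \emph{assumes} a solution with $\mathcal{R}(X)\subseteq\overline{\mathcal{R}(T^*)}$, since then $m(g)\in G\cap G^\perp=\{0\}$, $X$ is the inclusion, and $\iota\iota^*$ is a self-adjoint idempotent with range $G$. In short: your partial argument is sound, the step you flagged as delicate is genuinely the crux, and neither your proposal nor the paper's one-line appeal to Lemma~\ref{lem:closeness implies orthogonality} bridges it---indeed nothing can, without strengthening condition (iii).
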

\begin{proof}``(i)$\Longrightarrow$(ii)" follows from the proof of \cite[Theorem~1.1]{fang-yu-yao}. ``(ii)$\Longrightarrow$(iii)" is obvious.

 ``(iii)$\Longrightarrow$(i)": Suppose that there exists $X\in {\mathcal L}(G,E)$ such that $S=TX$.
 Taking $*$-operation, we get $X^* T^*=S^*$. This, together with \eqref{equ:characterization of the star of S} and the definition of $G$ given by \eqref{equ:defn of G and S}, yields
\begin{equation}\label{equ:X star identity}X^*z=z\ \mbox{for all $z \in G$}.\end{equation}
Note that $X^*\in {\mathcal L}(E,G)$,
so by \eqref{equ:X star identity} we conclude that
$\mathcal{R}(X^*)=G$, which is closed. It follows from Lemma~\ref{lem:closeness implies orthogonality} that $G$ is orthogonally complemented.

Suppose that condition (i) is satisfied and ${\mathcal R}(T^\prime)\subseteq {\mathcal R}(T)$. By the proof of \cite[Theorem~1.1]{fang-yu-yao} we know that in this case, condition (ii) in Theorem~\ref{thm:fang s result} is satisfied and the operator $V$ defined in Remark~\ref{rem:construction of operator V}
is adjointable such that $V^*$ is the unique solution to \eqref{equ:uniqueness of D}. Furthermore, by Remark~\ref{rem:construction of operator V} we know that $\Vert V^*\Vert^2=\Vert V\Vert^2=\inf\{\lambda: T^\prime (T^\prime)^*\le \lambda TT^*\}$.
\end{proof}

\begin{remark}Let $T\in \mathcal{L}(E,K)$ be such that $\overline{{\mathcal R}(T^*)}$ is not orthogonally complemented. Then
from the preceding theorem we conclude that the equation
$S=TX$ for $X \in {\mathcal L}(G,E)$ is unsolvable, where $G$ and $S$ are defined by \eqref{equ:defn of G and S}. This kind of unsolvability may also happen for other Hilbert $\mathfrak{A}$-modules $H$ and certain operator $T^\prime\in {\mathcal L}(H,K)$. Such an example is as follows:
\end{remark}
\begin{proposition}\label{prop:nonequivalence} There exist Hilbert $\mathfrak{A}$-modules $H,K$, and $T\in {\mathcal L}(H,K)$ and $T^\prime\in {\mathcal L}(K)$ such that ${\mathcal R}(T^\prime)$ is contained in ${\mathcal R}(T)$ properly, whereas the equation $TX=T^\prime$ for $X\in\mathcal{L}(K,H)$ has no solution.
\end{proposition}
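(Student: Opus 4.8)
The plan is to produce a concrete example over the commutative $C^*$-algebra $\mathfrak{A}=C[0,1]$, taking $H=\mathfrak{A}$ and $K=\ell^2(\mathfrak{A})$ (the standard Hilbert $\mathfrak{A}$-module of sequences $(a_n)$ with $\sum_n a_n^*a_n$ convergent in $\mathfrak{A}$, with generators $e_n$). The mechanism I want to exploit is the failure of ``weak'' versus ``norm'' square-summability in $\ell^2(\mathfrak{A})$. I will choose continuous bumps $v_n\in C[0,1]$ with support in $[\tfrac1{n+1},\tfrac1n]$, vanishing at the endpoints of the support, with $0\le v_n\le 1$ and $\max v_n=1$, and set $w_n=\sqrt{t}\,v_n$. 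Since the supports are essentially disjoint, the tails of $\sum_n v_n^*v_n$ have norm $\ge 1$ arbitrarily close to $t=0$, so the series does not converge in $\mathfrak{A}$ and $(v_n)\notin\ell^2(\mathfrak{A})$; on the other hand $\sum_{n>N}w_n^*w_n=\sum_{n>N}t\,v_n^2\le \tfrac1N$ (at most one nonzero term, with $t\le\tfrac1n$ on the support), so $(w_n)\in\ell^2(\mathfrak{A})$.

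Next I would define $T\in\mathcal{L}(\mathfrak{A},\ell^2(\mathfrak{A}))$ by $Ta=(\sqrt t\,a,0,0,\dots)$ and $T'\in\mathcal{L}(\ell^2(\mathfrak{A}))$ by $T'((a_n))=(\sum_n w_n^*a_n,\,0,0,\dots)$. Adjointability is routine: $T^*((b_n))=\sqrt t\,b_1$ and $(T')^*((b_n))=(w_n b_1)_n$, the latter well defined precisely because $(w_n)\in\ell^2(\mathfrak{A})$. Note that $\mathcal{R}(T^*)=\sqrt t\,\mathfrak{A}$, whence $\overline{\mathcal{R}(T^*)}=\{g\in C[0,1]:g(0)=0\}$, the standard example of a closed submodule that is not orthogonally complemented; by Theorem~\ref{thm:two orthogonally complemented conditions} this non-complementedness is exactly the obstruction that will force unsolvability.

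I would then verify the inclusion $\mathcal{R}(T')\subseteq\mathcal{R}(T)$. Since $\mathcal{R}(T)=\{(\sqrt t\,a,0,\dots):a\in\mathfrak{A}\}$, the point is to show that for every $(a_n)\in\ell^2(\mathfrak{A})$ the element $\sum_n w_n^*a_n$ is divisible by $\sqrt t$ in $\mathfrak{A}$, i.e.\ that $\sum_n v_n^*a_n=(\sum_n w_n^*a_n)/\sqrt t$ extends continuously to $[0,1]$. By disjointness of supports this sum has at most one nonzero term at each point, so it is continuous on $(0,1]$; and because $\|a_n\|\to0$ for $(a_n)\in\ell^2(\mathfrak{A})$, the estimate $|v_n(t)a_n(t)|\le\|a_n\|\to0$ as $t\to0$ shows the sum extends continuously with value $0$ at $t=0$. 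Properness is then immediate: $(\sqrt t,0,\dots)\in\mathcal{R}(T)$ via $a=1$, but it is not in $\mathcal{R}(T')$, since that would require $\sum_n v_n^*a_n\equiv 1$, impossible as every such function vanishes at $0$.

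Finally I would rule out solvability. Every $X\in\mathcal{L}(\ell^2(\mathfrak{A}),\mathfrak{A})$ has the row form $X((a_n))=\sum_n u_n^*a_n$ with $u_n=(Xe_n)^*$ and $(u_n)=X^*1\in\ell^2(\mathfrak{A})$; hence $TX((a_n))=(\sqrt t\sum_n u_n^*a_n,0,\dots)$, and testing $TX=T'$ on each $e_m$ forces $\sqrt t\,u_m=\sqrt t\,v_m$, so $u_m=v_m$ for all $m$. This would give $(v_n)=(u_n)\in\ell^2(\mathfrak{A})$, contradicting the choice of $(v_n)$, so no adjointable $X$ exists. I expect the range-inclusion step to be the main obstacle, since it is exactly the place where one must reconcile $(v_n)\notin\ell^2(\mathfrak{A})$ with the inclusion $\mathcal{R}(T')\subseteq\mathcal{R}(T)$ through the delicate summability bookkeeping above; the properness and non-solvability steps are comparatively short.
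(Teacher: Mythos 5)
Your proof is correct, but it takes a genuinely different route from the paper's. The paper works noncommutatively: it takes $\mathfrak{A}=\mathbb{B}(E)$ for an infinite-dimensional Hilbert space $E$, sets $H=\mathbb{B}(E)$, $K=\mathbb{K}(E)$, lets $T$ be left multiplication by a positive compact $s$ with $\overline{sK}=K$, and takes $T^\prime=(TT^*)^{\frac12}$, which is again left multiplication by $s$ but now on $K$; both the properness of $\mathcal{R}(T^\prime)\subseteq\mathcal{R}(T)$ and the unsolvability of $TX=T^\prime$ are then squeezed out of the single fact that $I_E\notin\mathbb{K}(E)$. You instead work over the commutative algebra $C[0,1]$ with $H=\mathfrak{A}$, $K=\ell^2(\mathfrak{A})$, and your obstruction is the disjointly supported bump sequence $(v_n)$, which is pointwise square-summable but not an element of $\ell^2(\mathfrak{A})$, while the damped sequence $(w_n)=(\sqrt{t}\,v_n)$ is. Your details check out: $(w_n)\in\ell^2(\mathfrak{A})$ by the tail estimate; $T^\prime$ is the adjointable operator $x\mapsto e_1\langle w,x\rangle$; the range inclusion holds because $\Vert a_n\Vert\to 0$ forces $\sum_n v_n a_n$ to converge uniformly to a continuous function vanishing at $0$; properness holds because every such function vanishes at $0$ while $\sqrt{t}=T(1)$ would require the value $1$ there; and the classification of adjointable maps $\ell^2(\mathfrak{A})\to\mathfrak{A}$ as rows $x\mapsto\langle X^*1,x\rangle$ (valid since $\mathfrak{A}$ is unital) turns solvability into the false statement $(v_n)\in\ell^2(\mathfrak{A})$. (The only cosmetic slip is writing $u_m$ where $u_m^*$ appears, immaterial since the $v_m$ are real-valued.) As for what each approach buys: the paper's example is shorter, riding on the compact-versus-bounded dichotomy, whereas yours is commutative and elementary, and it exhibits transparently the mechanism that Theorem~\ref{thm:two orthogonally complemented conditions} says must be present---your $\overline{\mathcal{R}(T^*)}=\{g\in C[0,1]:g(0)=0\}$ is precisely the standard example of a closed submodule that is not orthogonally complemented.
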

\begin{proof} Let $E$ be any countably infinite dimensional Hilbert space, $\mathbb{B}(E)$ (resp.\,$\mathbb{K}(E)$) be the set of all bounded (resp.\,compact) linear operators on $E$, and $I_E$ be the identity operator on $E$. Let $\mathfrak{A}=\mathbb{B}(E)$, $H=\mathbb{B}(E)$ and $K=\mathbb{K}(E)$. Then $H$ and $K$ are Hilbert $\mathfrak{A}$-modules whose $\mathfrak{A}$-valued inner-products are given by $\langle x,y\rangle=x^*y$ for any $x,y\in H$ and $x,y\in K$, respectively.

Choose any element $s \in K_+$ such that
$\overline{s\,K}=K$, where $sK=\{sy:y\in K\}$ and $\overline{s\,K}$ denotes the norm closure of $sK$ (see \cite[Example~3.1]{Xu-Fang} for the construction of such an element $s$).
Let $T\in\mathcal{L}(H,K)$ be defined by
\begin{equation}\label{equ:defn of T}T(x)=sx\ \mbox{for any $x\in H$}.\end{equation}
Then clearly, $T^*\in\mathcal{L}(K, H)$ and $T^\prime\stackrel{def}{=}(TT^*)^\frac12\in\mathcal{L}(K)$ are given by
\begin{equation}\label{equ:expression for T prime}T^*(y)=T^\prime(y)=sy\ \mbox{for any $y\in K$}.\end{equation}
As $I_E\in H$ and $K\subseteq H$, it follows from \eqref{equ:defn of T} and \eqref{equ:expression for T prime} that
$s\in \mathcal{R}(T)$ and $\mathcal{R}\left(T^\prime\right)\subseteq \mathcal{R}(T)$. We prove that $s\notin \mathcal{R}\left(T^\prime\right)$. Indeed, if $s=sy$ for some $y\in K$, then $y^*s=s$, which means by $\overline{s\,K}=K$ that $y^* z=z$ for all $z \in K$. This happens only if $I_E=y^*\in K$, in contradiction to the assumption that $E$ is infinite dimensional. This completes the proof that ${\mathcal R}(T^\prime)$ is contained in ${\mathcal R}(T)$ properly.

Assume that there exists $X\in\mathcal{L}(K,H)$ such that $TX=T^\prime$. Taking $*$-operation, we get
$X^* T^*=T^\prime$. This, together with \eqref{equ:expression for T prime} and the equality $\overline{s\,K}=K$, yields $X^*y=y$ for all $y \in K$. Thus, for any $u,v\in K$, we have
\begin{equation*}(Xu)^*v=\langle Xu,v\rangle=\langle u, X^*v\rangle=\langle u, v\rangle=u^*v,
\end{equation*}
which means that $(Xu)^*=u^*$, and hence $Xu=u$ for any $u\in K$.

Put $a=X^*(I_E)$. Then $a\in K$. For any $u\in K$, we have
\begin{equation*}u^*=\langle u, I_E\rangle=\langle Xu, I_E\rangle=\langle u, a\rangle=u^*a.
\end{equation*}
Taking $*$-operation, we get $a^* u=u$ for any $u\in K$, hence $a^*=I_E\in K$, which is a contradiction.
\end{proof}

\vspace{5ex}
\noindent\textbf{Acknowledgement}
\vspace{2ex}

This research was initiated while the second author visited the Department of Mathematics at Shanghai Normal University in 2017. The authors thank the referee for valuable suggestions.

\bibliographystyle{amsplain}

\end{document}